\newtheorem{lma}{Lemma}[section]
\newaliascnt{thmCt}{lma}
\newtheorem{thm}[thmCt]{Theorem}
\newaliascnt{corCt}{lma}
\newaliascnt{prpCt}{lma}
\newtheorem*{thm*}{Theorem}
\newtheorem*{cor*}{Corollary}
\newtheorem*{prop*}{Proposition}
\theoremstyle{definition}
\newaliascnt{pgrCt}{lma}
\newtheorem{pgr}[pgrCt]{}
\newaliascnt{dfnCt}{lma}
\newaliascnt{rmkCt}{lma}
\newaliascnt{rmksCt}{lma}
\newaliascnt{exaCt}{lma}
\newaliascnt{egCt}{lma}
\newaliascnt{qstCt}{lma}
\newaliascnt{pbmCt}{lma}
\newaliascnt{ntnCt}{lma}
\numberwithin{equation}{section}
\keywords{groupoid, inverse semigroup, representation}
\author{Marat Aukhadiev}
\title{Groupoids viewed as inverse semigroups}
\address{University of M{\"u}nster, Einsteinstr. 61, 48149 M{\"u}nster, Germany.\newline m.aukhadiev@uni-muenster.de}
\begin{document}


\maketitle

\section{Groupoids viewed as inverse semigroups}

Apart from the well-known construction of a universal groupoid of an inverse semigroup described in \cite{Paterson}, there exists one more simple connection between groupoids and inverse semigroups.
 We recall the definitions of an inverse semigroup and a groupoid from \cite{Paterson}.

\begin{pgr}Let $P$ be a semigroup. Elements $x$ and $x^*$ in $P$ are called \emph{mutual inverses} if $$xx^*x=x,\mbox{ and } x^*xx^*=x^*.$$ The semigroup $P$ is called an \emph{inverse semigroup} if for any $x\in P$ there exists a \textbf{unique}  inverse element $x^*\in P$. Further, $P$ always stands for an inverse semigroup. 

\begin{thm}\label{vagner}(V.~V.~Vagner \cite{Vagner}). For a semigroup $S$ in which every element has an inverse, uniqueness of inverses is equivalent to the requirement that all idempotents in $S$ commute.
\end{thm}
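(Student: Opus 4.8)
The plan is to prove the two implications separately, since the statement is an equivalence for a semigroup $S$ in which every element is required only to have \emph{at least one} inverse.

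For the direction ``all idempotents commute $\Rightarrow$ inverses are unique'', I would start with an element $x$ together with two inverses $y$ and $z$, so $xyx=x$, $yxy=y$, $xzx=x$, $zxz=z$. The first observation is that $xy$, $yx$, $xz$, $zx$ are all idempotents; for instance $(yx)^2 = y(xyx) = yx$. Using the hypothesis that idempotents commute, I would compute the product $(yx)(zx)$ in two ways: directly it is $y(xzx)=yx$, and after commuting the two idempotents it is $z(xyx)=zx$; hence $yx=zx$. Symmetrically, computing $(xy)(xz)$ two ways gives $xy=xz$. Then $y = yxy = (yx)y = (zx)y = z(xy) = z(xz) = zxz = z$, and we are done. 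This direction is a short computation and I do not expect any real obstacle in it.

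The other direction, ``inverses are unique $\Rightarrow$ idempotents commute'', is where the work lies. Given idempotents $e,f$, let $x$ denote the unique inverse of $ef$, so that $(ef)x(ef)=ef$ and $x(ef)x=x$. The key step is to verify that $fxe$ is \emph{also} an inverse of $ef$: using $ee=e$ and $ff=f$ one reduces $(ef)(fxe)(ef)$ to $(ef)x(ef)=ef$, and $(fxe)(ef)(fxe)$ to $f\bigl(x(ef)x\bigr)e = fxe$. By uniqueness of inverses, $x=fxe$, and consequently $fx=x=xe$. Therefore $x^2 = (xe)(fx) = x(ef)x = x$, so $x$ is idempotent; but an idempotent is its own inverse, while $ef$ is also an inverse of $x$, so uniqueness forces $ef=x$, i.e.\ $ef$ is itself idempotent. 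Finally, since $e$ and $f$ were arbitrary idempotents, $fe$ is idempotent as well, and a direct check shows $fe$ is an inverse of $ef$ (for example $(ef)(fe)(ef) = efef = (ef)^2 = ef$, using that $ef$ is idempotent); as the unique inverse of $ef$ is $ef$ itself, we conclude $ef=fe$.

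I expect the main obstacle to be this second implication — specifically, hitting on the auxiliary element $fxe$ and realizing that the right target is to show that the inverse $x$ of $ef$ is \emph{forced} to be idempotent, which then collides with uniqueness. The closing bootstrap — passing from ``$ef$ is idempotent for every pair of idempotents $e,f$'' to ``$ef=fe$'' — is the kind of step that is easy to overlook. Everything else is routine semigroup algebra once the two defining relations $(ef)x(ef)=ef$ and $x(ef)x=x$ are applied in the right order.
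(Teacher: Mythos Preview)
Your argument is correct in both directions and is, in fact, the classical proof of Vagner's theorem: the first implication via the idempotents $xy,yx,xz,zx$, and the second via the auxiliary inverse $fxe$ of $ef$, forcing $ef$ to be idempotent and then equal to $fe$. There is nothing to compare against here, however: the paper does not supply its own proof of this theorem but merely states it with a reference to Vagner's original 1952 note, so your write-up goes beyond what the paper contains rather than paralleling it.
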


The set of idempotents of an inverse semigroup $P$ forms a commutative semigroup denoted $E(P)$. In fact, $$E(P)=\{ xx^*| x\in P\}=\{x^*x| x\in P \}.$$ 
\end{pgr}

\begin{pgr} A groupoid is a set $G$ together with a subset $G^2\subset G\times G$, a product map $(a,b)\mapsto ab$ from $G^2$ to $G$, and an inverse map $a\mapsto a^{-1}$ (so that $(a^{-1})^{-1}=a$) from $G$ onto $G$ such that:
\begin{enumerate}
	\item if $(a,b)$, $(b,c)\in G^2$, then $(ab,c)$, $(a,bc)\in G^2$ and $(ab)c=a(bc)$;
	\item $(b,b^{-1})\in G^2$ for all $b\in G$, and if $(a,b)$ belongs to $G^2$, then $a^{-1}(ab)=b$ and $(ab)b^{-1}=a$.
\end{enumerate}
Define source and range maps $d,r$ on $G$ by 
$$d(x)=x^{-1}x,\ r(x)=xx^{-1}.$$
Then for any $x\in G$ we have $d(x^{-1})=r(x)$, $r(x^{-1})=d(x)$, $x=xd(x)=r(x)x$. One can check that $(x,y)\in G^2$ if and only if $d(x)=r(y)$. Denote by $G^0$ the image of $d$ (and $r$). For any $x\in G^0$ we have $x^{-1}=x^2=x$.
\end{pgr}

\begin{lma} For any groupoid $G$ there exists an inverse semigroup $S(G)$, such that as a set $S(G)=G\cup\{0\}$ and multiplication in $S(G)$ extends the multiplication in $G$.
\end{lma}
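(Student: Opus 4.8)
The plan is to equip $S(G)=G\cup\{0\}$ with the obvious ``partial product extended by zero'': declare $0$ to be an absorbing element, so $0s=s0=0$ for every $s\in S(G)$, and for $x,y\in G$ set $xy$ equal to the groupoid product when $(x,y)\in G^2$ (equivalently $d(x)=r(y)$) and $xy=0$ otherwise. Everything then reduces to three checks: that this operation is associative, that every element has an inverse, and that the inverse is unique. For the last of these I intend to invoke \autoref{vagner}, so I only have to verify that the idempotents of $S(G)$ commute.

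Before attacking associativity I would record two bookkeeping identities: if $(x,y)\in G^2$ then $r(xy)=r(x)$ and $d(xy)=d(y)$. These follow at once from the axioms: since $x=r(x)x$ we have $(r(x),x),(x,y)\in G^2$, hence by axiom (1) the product $r(x)(xy)=(r(x)x)y=xy$ is defined, and a product $r(x)(xy)$ can only be defined when $d(r(x))=r(xy)$, i.e.\ $r(x)=r(xy)$; the identity $d(xy)=d(y)$ is the mirror statement, using $y=y\,d(y)$. With these in hand the associativity check is short. For $x,y,z\in G$, the element $(xy)z$ is nonzero in $S(G)$ iff $(x,y)\in G^2$ and $d(xy)=r(z)$, i.e.\ iff $(x,y),(y,z)\in G^2$ (using $d(xy)=d(y)$); by the symmetric argument $x(yz)$ is nonzero iff $(y,z)\in G^2$ and $d(x)=r(yz)$, i.e.\ iff $(x,y),(y,z)\in G^2$ (using $r(yz)=r(y)$). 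So the two iterated products are simultaneously nonzero, and when they are, axiom (1) gives $(xy)z=x(yz)$ in $G$. In every remaining case at least one of $(x,y),(y,z)$ fails to lie in $G^2$, and the equivalences just established force both $(xy)z$ and $x(yz)$ to equal $0$. Hence the operation is associative.

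Next I would put $x^*=x^{-1}$ for $x\in G$ and $0^*=0$, and verify the inverse relations. For $x\in G$ we have $xx^*=xx^{-1}=r(x)\in G^0$, and since $d(r(x))=r(x)$ the pair $(r(x),x)$ lies in $G^2$ with $r(x)x=x$, so $xx^*x=x$; symmetrically $x^*xx^*=x^*$, and the relations for $0$ are trivial. Thus every element of $S(G)$ has an inverse. It remains to show the idempotents commute. If $e\neq 0$ is idempotent then $(e,e)\in G^2$ and $ee=e$, so by axiom (2) $e=e^{-1}(ee)=e^{-1}e=d(e)\in G^0$; conversely every $u\in G^0$ satisfies $u^{-1}=u^2=u$ and so is idempotent. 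Hence $E(S(G))=G^0\cup\{0\}$. For $u,v\in G^0$ one has $d(u)=u=r(u)$, so $(u,v)\in G^2$ only when $u=v$; therefore $uv=vu=0$ when $u\neq v$ and $uv=vu=u$ when $u=v$, while $0$ commutes with everything. So all idempotents commute, \autoref{vagner} yields uniqueness of inverses, and $S(G)$ is an inverse semigroup whose multiplication restricts to that of $G$.

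I expect the associativity verification to be the only step that needs genuine care; the two identities $r(xy)=r(x)$ and $d(xy)=d(y)$ are precisely what keep the case analysis from proliferating, and once they are available the argument collapses to the single nontrivial case handled by axiom (1). The degenerate situations (for instance $G=\varnothing$, or a groupoid that already happens to contain a zero element) should be noted but cause no real difficulty.
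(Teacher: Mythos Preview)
Your proof is correct and follows essentially the same route as the paper: extend the groupoid product by an absorbing zero, set $x^*=x^{-1}$, and observe that distinct nonzero idempotents (which are exactly the units in $G^0$) are orthogonal, so idempotents commute and Vagner's theorem gives uniqueness of inverses. The one substantive difference is that you actually carry out the associativity check via the identities $r(xy)=r(x)$ and $d(xy)=d(y)$, whereas the paper leaves associativity implicit; your version is the more complete of the two.
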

\begin{proof} As a set take $S(G)=G^1\cup \{0\}$. For any pair $(a,b)\in G\times G \setminus G^2  $ set $a\cdot b=0$. For every $a\in G$ set  $a^*=a^{-1}$, and $0^*=0$. The relation $aa^*a=a$ then follows immediately. Recall, any idempotent in $G$ is of the form $aa^{-1}$ and $(a^{-1},b)\in G^2$ only if $aa^{-1}=d(a^{-1})=r(b)=bb^{-1}$. Therefore, for any $a,b\in S(G)$ either $a^*b=0$ and then $aa^*bb^*=0$, or $aa^*bb^*=aa^*=bb^*=bb^*aa^*$. Therefore, any two idempotents in $S(G)$ are either orthogonal or equal. Hence, $S(G)$ is an inverse semigroup.
\end{proof}

\begin{lma} For any inverse semigroup $S$ with a zero and without a unit, in which all projections are mutually orthogonal, there exists a groupoid $G(S)=S\setminus\{0\} $ with a multiplication restricted from $S$. 
\end{lma}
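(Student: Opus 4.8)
The plan is to take $G=G(S)=S\setminus\{0\}$, to use the semigroup inverse as the groupoid inverse, $a^{-1}:=a^{*}$, and to declare a pair $(a,b)\in G\times G$ \emph{composable} exactly when $ab\neq0$ in $S$, the partial product of a composable pair being the product inherited from $S$ (which then automatically lies in $G$). First I would record the easy well-definedness facts: if $a\in G$ then $a^{*}\in G$ (otherwise $a=aa^{*}a=0$), likewise $aa^{*}\neq0$ and $a^{*}a\neq0$, the map $a\mapsto a^{*}$ is a bijection of $G$ onto itself, and $(a^{-1})^{-1}=a$. Writing $d(x)=x^{*}x$ and $r(x)=xx^{*}$ as in the groupoid axioms (these are idempotents of $S$), the construction will rest on the single observation
$$ab\neq0\quad\iff\quad a^{*}a=bb^{*}\quad\iff\quad d(a)=r(b),$$
together with the identities $r(ab)=r(a)$ and $d(ab)=d(b)$ valid whenever $ab\neq0$. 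This is the only place where the orthogonality of idempotents is used.

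To prove the observation: if $a^{*}a\neq bb^{*}$, these are distinct (nonzero) idempotents of $S$, hence orthogonal, so $(a^{*}a)(bb^{*})=0$ and therefore $ab=(aa^{*}a)(bb^{*}b)=a\bigl((a^{*}a)(bb^{*})\bigr)b=0$. Conversely, if $a^{*}a=bb^{*}$ then (recalling that $(ab)^{*}=b^{*}a^{*}$) $(ab)(ab)^{*}=a(bb^{*})a^{*}=a(a^{*}a)a^{*}=aa^{*}\neq0$, so $ab\neq0$; the same line gives $r(ab)=r(a)$, and $(ab)^{*}(ab)=b^{*}(a^{*}a)b=b^{*}(bb^{*})b=b^{*}b$ gives $d(ab)=d(b)$.

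It then remains to verify the two groupoid axioms, which is now mechanical. For axiom~(1): if $(a,b)$ and $(b,c)$ are composable, then $d(a)=r(b)$ and $d(b)=r(c)$, so by the identities above $d(ab)=d(b)=r(c)$, whence $(ab,c)$ is composable, and $r(bc)=r(b)=d(a)$, whence $(a,bc)$ is composable, while $(ab)c=a(bc)$ is associativity in $S$. For axiom~(2): $(b,b^{-1})$ is composable since $d(b)=b^{*}b=b^{*}(b^{*})^{*}=r(b^{*})$; and for composable $(a,b)$ we get $a^{-1}(ab)=(a^{*}a)b=(bb^{*})b=b$ and $(ab)b^{-1}=a(bb^{*})=a(a^{*}a)=a$. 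Hence $G(S)$ is a groupoid with multiplication restricted from $S$. I expect the only real content to be the displayed equivalence — the assertion that orthogonality of idempotents forces the semigroup product to vanish off the ``diagonal'' $d(a)=r(b)$ — everything else being routine inverse-semigroup algebra; the hypothesis that $S$ has no unit is not needed for this bare construction, but it is exactly what prevents $S$ from being a group with a zero adjoined, and so is what makes $G(S)$ and the $S(G)$ of the previous lemma mutually inverse constructions.
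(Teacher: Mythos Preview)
Your proof is correct and follows essentially the same approach as the paper: define $G=S\setminus\{0\}$, $G^{(2)}=\{(a,b):ab\neq0\}$, $a^{-1}=a^{*}$, and pivot on the equivalence $ab\neq0\iff a^{*}a=bb^{*}$ coming from orthogonality of idempotents. The only cosmetic difference is in axiom~(1): the paper checks $(xy)z\neq0$ by contradiction (if $xyz=0$ then $x^{*}xyz=0$, i.e.\ $yy^{*}yz=0$, so $yz=0$), whereas you derive it directly from the identities $r(ab)=r(a)$, $d(ab)=d(b)$; your version is a bit more explicit about well-definedness and about the role of the ``no unit'' hypothesis, but the substance is the same.
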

\begin{proof} Indeed, define the groupoid $G=G(S)$ by formulas
$$G=S\setminus\{0\},\ G^{(2)}=\{(x,y)\in S\times S| \ x\cdot y\neq 0\},$$
$$x\cdot y=x{\cdot}_{S} y, \  x^{-1}=x^*.$$ 
Then, as usual, we get range and source maps: $r(x)=xx^*$, $d(x)=x^*x$ and the set of units $G^{(0)}=\{xx^*|\ x\in S,\ x\neq 0\}$, the set of non-zero idempotents of $S$. For any $x,y\in S$ we have $xy=0$ if and only if $x^*xyy^*=0$. Therefore, $(x,y)\in G^{(2)}$ if and only if $x^*x=yy^*$. Hence, for such $x,y$ one has $x^*xy=y$, $xyy^*=x$. Now the last thing is to check associativity. Take $x,y,z\in S$, such that $(x,y), (y,z)\in G^{(2)}$ and suppose $(xy)z=xyz=0$. Then one has $$x^*xyz=0 \Rightarrow yy^*yz=0 \Rightarrow yz=0,$$ which is a contradiction.
\end{proof}

We see that algebraically groupoids form a special class of inverse semigroups: inverse semigroups with zero and without a unit, and where all projections are mutually orthogonal. It is easy to verify that if $G$ is a discrete groupoid,  then any representation of $G$ generates a *-representation of $S(G)$. The reverse is also true if we require that a *-representation of an inverse semigroup satisfies $\pi(0)=0$. Hence, the C*-algebra $C^*(G)$ is isomorphic to $C^*(S(G))$.

Unfortunately, this does not hold for locally compact groupoids or even r-discrete groupoids. The reason is that the multiplication extended from a groupoid $G$ to $S(G)$ is not continuous, so the topology of $G$ does not make $S(G)$ into a topological semigroup. From this point of view, the theory of topological groupoids is a theory of a special class of inverse semigroups with a ``partial'' topology, i.e. a topology given on its subspaces.

\end{document}